\def\frk{\frak}               
\def\Phi{{\frk n}}
\def\Phi{{\frk N}}
\def\opn#1#2{\def#1{\operatorname{#2}}} 
\opn\chara{char} \opn\length{\ell} \opn\pd{pd} \opn\rk{rk}
\opn\projdim{proj\,dim} \opn\injdim{inj\,dim} \opn\rank{rank}
\opn\depth{depth} \opn\sdepth{sdepth} \opn\fdepth{fdepth}
\opn\grade{grade} \opn\height{height} \opn\embdim{emb\,dim}
\opn\codim{codim}  \opn\min{min} \opn\max{max}
\opn\Tr{Tr} \opn\bigrank{big\,rank}
\opn\superheight{superheight}\opn\lcm{lcm}
\opn\trdeg{tr\,deg}
\opn\reg{reg} \opn\lreg{lreg} \opn\ini{in} \opn\lpd{lpd}
\opn\size{size}
\opn\div{div} \opn\Div{Div} \opn\cl{cl} \opn\Cl{Cl}
\opn\Spec{Spec} \opn\Supp{Supp} \opn\supp{supp} \opn\Sing{Sing}
\opn\Ass{Ass} \opn\Min{Min}
\opn\Ann{Ann} \opn\Rad{Rad} \opn\Soc{Soc}
\opn\Im{Im} \opn\Ker{Ker} \opn\Coker{Coker} \opn\Am{Am}
\opn\Hom{Hom} \opn\Tor{Tor} \opn\Ext{Ext} \opn\End{End}
\opn\Aut{Aut} \opn\id{id}  \opn\deg{deg}
\opn\nat{nat}
\opn\pff{pf}
\opn\Pf{Pf} \opn\GL{GL} \opn\SL{SL} \opn\mod{mod} \opn\ord{ord}
\opn\Gin{Gin} \opn\Hilb{Hilb}
\opn\aff{aff} \opn\con{conv} \opn\relint{relint} \opn\st{st}
\opn\lk{lk} \opn\cn{cn} \opn\core{core} \opn\vol{vol}
\opn\link{link} \opn\star{star}
\opn\gr{gr}
\def\pot#1#2{#1[\kern-0.28ex[#2]\kern-0.28ex]}
\opn\dirlim{\underrightarrow{\lim}}
\opn\inivlim{\underleftarrow{\lim}}
\let\to=\rightarrow
\def\Implies{\ifmmode\Longrightarrow \else
        \unskip${}\Longrightarrow{}$\ignorespaces\fi}
\def\implies{\ifmmode\Rightarrow \else
        \unskip${}\Rightarrow{}$\ignorespaces\fi}
\def\iff{\ifmmode\Longleftrightarrow \else
        \unskip${}\Longleftrightarrow{}$\ignorespaces\fi}
\newtheorem{Theorem}{Theorem}[]
\newtheorem{Lemma}[Theorem]{Lemma}
\newtheorem{Corollary}[Theorem]{Corollary}
\newtheorem{Proposition}[Theorem]{Proposition}
\newtheorem{Remark}[Theorem]{Remark}
\newtheorem{Example}[Theorem]{Example}
\let\epsilon\varepsilon
\let\phi=\varphi
\let\kappa=\varkappa
\def\qed{\ifhmode\textqed\fi
      \ifmmode\ifinner\quad\qedsymbol\else\dispqed\fi\fi}
\def\textqed{\unskip\nobreak\penalty50
       \hskip2em\hbox{}\nobreak\hfil\qedsymbol
       \parfillskip=0pt \finalhyphendemerits=0}
\def\dispqed{\rlap{\qquad\qedsymbol}}
\opn\dis{dis}
\def\pnt{{\raise0.5mm\hbox{\large\bf.}}}
\opn\Lex{Lex}
\begin{document}
\title{\bf Depth and minimal number of generators of square free  monomial  ideals}

\author{ Dorin Popescu }

\thanks{The  support from  the CNCSIS grant PN II-542/2009 of Romanian Ministry of Education, Research and Inovation is gratefully acknowledged.}

\address{Dorin Popescu, Institute of Mathematics "Simion Stoilow", Research unit 5,
University of Bucharest, P.O.Box 1-764, Bucharest 014700, Romania}
\email{dorin.popescu@imar.ro}

\maketitle
\begin{abstract}
Let $I$ be an ideal of a polynomial algebra  over a field generated by square free monomials of degree $\geq d$. If $I$ contains more monomials of degree $d$ than
$(n-d)/(n-d+1)$ multiplied with the  number of square free monomials of $S$ of degree $d$ then $\depth_SI\leq d$, in particular the Stanley's Conjecture holds in this case.
  \vskip 0.4 true cm
 \noindent
  {\it Key words } : Monomial Ideals,  Depth, Stanley depth.\\
 {\it 2000 Mathematics Subject Classification: Primary 13C15, Secondary 13F20, 13F55,
13P10.}
\end{abstract}

Let $S=K[x_1,\ldots,x_n]$ be the polynomial algebra in $n$-variables over a field $K$  and $I\subset S$ a square free monomial ideal. Let $d$ be a positive integer and $\rho_d(I)$ be the number of all square free monomials of degree $d$ of $I$.

The  proposition below was repaired using an idea of Y. Shen to whom we owe thanks.
\begin{Proposition} \label{nice}  If $I$ is generated by square free monomials of degree $\geq d$ and \\
$\rho_d(I)> ((n-d)/(n-d+1)){n\choose d}$ then $\depth_SI\leq d$.
\end{Proposition}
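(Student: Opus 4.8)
The plan is to pass to the Stanley--Reisner picture. Write $I=I_\Delta$ for the simplicial complex $\Delta$ on $[n]$ whose faces are the supports of the square free monomials not in $I$. Since $I$ is generated in degrees $\ge d$, no square free monomial of degree $\le d-1$ can lie in $I$; hence every subset of $[n]$ of cardinality $\le d-1$ is a face, i.e. $\Delta$ contains the complete $(d-2)$-skeleton of the full simplex $\Gamma=2^{[n]}$. In these terms $\rho_d(I)$ counts exactly the $d$-subsets that are non-faces, so the number of $(d-1)$-dimensional faces of $\Delta$ is $f_{d-1}(\Delta)=\binom nd-\rho_d(I)$, and the hypothesis reads $f_{d-1}(\Delta)<\frac{1}{n-d+1}\binom nd$. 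Because $\depth_S I=\depth_S(S/I)+1$ (from $0\to I\to S\to S/I\to 0$, as $\depth_S(S/I)<n$), it suffices to prove $\depth_S(S/I)\le d-1$.

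My strategy is to exhibit a non-vanishing reduced homology group and invoke Hochster's formula. Concretely, I will show $\widetilde H_{d-2}(\Delta;K)\neq 0$; by Hochster's formula $\beta_{n-d+1,[n]}(S/I)=\dim_K\widetilde H_{d-2}(\Delta;K)$, so this forces $\pd_S(S/I)\ge n-d+1$, whence $\depth_S(S/I)\le d-1$ by Auslander--Buchsbaum. To control $\widetilde H_{d-2}(\Delta;K)$ I compare the reduced chain complex of $\Delta$ with that of $\Gamma$. This homology depends only on faces of dimension $\le d-1$, and in dimensions $d-2$ and $d-3$ the complexes of $\Delta$ and of $\Gamma$ coincide (complete skeleton); hence the cycle spaces agree, $Z_{d-2}(\Delta)=Z_{d-2}(\Gamma)$. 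Since $\Gamma$ is acyclic, $\dim_K Z_{d-2}(\Gamma)=\rank\partial^{\Gamma}_{d-1}=\binom{n-1}{d-1}$. On the other hand the boundaries $B_{d-2}(\Delta)$ form the image of $\partial_{d-1}$ restricted to the span of the $(d-1)$-faces of $\Delta$, so $\dim_K B_{d-2}(\Delta)\le f_{d-1}(\Delta)$. Therefore $\dim_K\widetilde H_{d-2}(\Delta;K)\ge \binom{n-1}{d-1}-f_{d-1}(\Delta)$.

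It remains to close the loop with the numerical hypothesis: I need $f_{d-1}(\Delta)<\binom{n-1}{d-1}$. This follows from $f_{d-1}(\Delta)<\frac{1}{n-d+1}\binom nd$ together with the elementary inequality $\frac{1}{n-d+1}\binom nd\le\binom{n-1}{d-1}$, which rearranges to $(d-1)(n-d)\ge 0$ and so always holds. Hence $\widetilde H_{d-2}(\Delta;K)\ne 0$ and $\depth_S I\le d$. I expect the genuine content — the steps that are not routine — to be the two structural observations feeding the homology comparison: recognizing that ``generated in degrees $\ge d$'' forces the complete $(d-2)$-skeleton, and that the right invariant to make non-zero is precisely $\widetilde H_{d-2}$, the top homology in the range governed by the missing $d$-faces. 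The rank computation $\rank\partial^{\Gamma}_{d-1}=\binom{n-1}{d-1}$ and the final binomial inequality are then straightforward linear algebra. (The borderline case $d=1$ is degenerate: the hypothesis forces $I=(x_1,\dots,x_n)$ with $\depth_S I=1$, which is checked directly.)
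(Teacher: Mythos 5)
Your proof is correct, and it takes a genuinely different route from the paper's. The paper argues by induction on $n$ with a case split: if some variable, say $x_1$, divides more than $((n-d)/(n-d+1))\binom{n-1}{d-1}$ of the square free degree-$d$ monomials of $I$, one passes to $J=(I:x_1)\cap K[x_2,\dots,x_n]$, applies the induction hypothesis to get $\depth_{S'}J\leq d-1$, and concludes via a result of Rauf comparing the depth of $I$ with that of the colon ideal; otherwise every variable divides at most that many, and a double count (each square free monomial of degree $d$ lies in exactly $d$ of the sets $I\cap(x_i)$) contradicts the hypothesis outright. Your argument instead works on the Stanley--Reisner side: the hypothesis translates into $f_{d-1}(\Delta)<\frac{1}{n-d+1}\binom{n}{d}\leq\binom{n-1}{d-1}=\rank\partial^{\Gamma}_{d-1}$, the presence of the complete $(d-2)$-skeleton gives $Z_{d-2}(\Delta)=Z_{d-2}(\Gamma)$, hence $\dim_K\widetilde H_{d-2}(\Delta;K)\geq\binom{n-1}{d-1}-f_{d-1}(\Delta)>0$, and Hochster's formula plus Auslander--Buchsbaum yield $\depth_S(S/I)\leq d-1$, i.e. $\depth_SI\leq d$. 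All the steps check: the reduction $\depth_SI=\depth_S(S/I)+1$, the rank computation for the full simplex, the rearrangement to $(d-1)(n-d)\geq 0$, and the separate treatment of $d=1$ (which your general argument in fact also covers, via $\widetilde H_{-1}$). What your route buys: it is non-inductive, needs no external depth-of-colon result, and proves strictly more, namely the nonvanishing of the extremal Betti number $\beta_{n-d+1,[n]}(S/I)$ together with an explicit lower bound on its dimension. What the paper's route buys: it is entirely elementary (no simplicial homology or Hochster's formula), and its colon-plus-counting template is exactly the scheme the author points to as extendable (in the cited preprint) to the stronger hypothesis $\mu(I)>\binom{n}{d+1}$, where your counting margin $\binom{n-1}{d-1}-f_{d-1}(\Delta)$ would no longer be positive in general.
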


\begin{proof}
Apply induction on $n$. If $n=d$ then there exists nothing to show. Suppose that $n>d$. Let $\nu_i$ be the number of the square free monomials of degree $d$ from $I\cap (x_i)$. We may consider two cases renumbering the variables if necessary.

{\bf Case 1}\ \  $\nu_1>((n-d)/(n-d+1)){n-1\choose d-1}$.

Let $S':=K[x_2,\ldots,x_n]$ and $x_1c_1,\ldots,x_1c_{\nu_1}$, $c_i\in S'$  be the square free monomials of degree $d$ from $I\cap (x_1)$. Then $J=(I:x_1)\cap S'$ contains $(c_1,\ldots,c_{\nu_1})$ and so $ \rho_{d-1}(J)\geq \nu_1>((n-d)/(n-d+1)){n-1\choose d-1}.$ By induction hypothesis, we get $\depth_{S'}J\leq d-1$. It follows $\depth_SJS\leq d$  and so $\depth_SI\leq d$ by \cite[Proposition 1.2]{R}.

{\bf Case 2}\ \ $\nu_i\leq ((n-d)/(n-d+1)){n-1\choose d-1}$ for all $i\in [n]$.

We get $\sum_{i=1}^n\nu_i\leq n((n-d)/(n-d+1)){n-1\choose d-1}$. Let $A_i$ be the set of the square free monomials of degree $d$ from $I\cap (x_i)$. A square free monomial from $I$ of degree $d$  will be present in $d$-sets $A_i$ and it follows
$$\rho_d(I)=|\cup_{i=1}^n A_i|\leq (n/d)((n-d)/(n-d+1)){n-1\choose d-1}= ((n-d)/(n-d+1)){n\choose d}$$
 if $n\geq d+1$. Contradiction!
\end{proof}

\begin{Remark}\label{h}{\em  If $I$ is generated by  square free monomials of  degree $\geq d$, then  $\depth_SI\geq d$.
Indeed, since $I$ has a square free resolution the last shift in the resolution of $I$ is at most $n$. Thus if $I$ is generated in degree $\geq d$, then the  resolution can have length at most $n-d$, which means that the depth of $I$ is greater than or equal to $d$ (this argument belongs to J. Herzog). Hence in the setting of the above proposition we get  $\depth_SI= d$.}
\end{Remark}

\begin{Corollary}\label{small} Let $I$ be an ideal generated by $\mu(I)$ square free monomials of degree $d$. If $\mu(I)> ((n-d)/(n-d+1)){n\choose d}$ then $\depth_SI= d$.
\end{Corollary}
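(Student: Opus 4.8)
The plan is to derive the equality by combining the two bounds already established in the excerpt: Proposition~\ref{nice} supplies $\depth_S I\leq d$, while Remark~\ref{h} supplies $\depth_S I\geq d$, provided the hypotheses of each are seen to apply to the ideal at hand.

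For the lower bound there is essentially nothing to do beyond an observation: an ideal generated by square free monomials of degree exactly $d$ is in particular generated by square free monomials of degree $\geq d$, so Remark~\ref{h} applies verbatim and gives $\depth_S I\geq d$.

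For the upper bound I would check that the numerical hypothesis of Proposition~\ref{nice} is met, namely that $\rho_d(I)>((n-d)/(n-d+1)){n\choose d}$. The key point is to compare $\rho_d(I)$ with $\mu(I)$. Since $I$ is a monomial ideal it has a unique minimal monomial generating set, and by assumption this set consists of $\mu(I)$ square free monomials of degree $d$; each of these is a square free monomial of degree $d$ lying in $I$, so $\rho_d(I)\geq\mu(I)$. (In fact equality holds: any square free monomial of degree $d$ in $I$ must be divisible by some minimal generator, and since that generator is itself of degree $d$, the two monomials coincide.) Hence the assumption $\mu(I)>((n-d)/(n-d+1)){n\choose d}$ transfers at once to $\rho_d(I)$, and Proposition~\ref{nice} yields $\depth_S I\leq d$.

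Putting the two inequalities together gives $\depth_S I=d$. I do not expect a genuine obstacle here: the corollary is a direct specialization of the proposition, and the only thing worth recording is the harmless inequality $\rho_d(I)\geq\mu(I)$, which holds precisely because the minimal generators of $I$ sit among the degree-$d$ square free monomials of $I$; everything else is a citation of the preceding two results.
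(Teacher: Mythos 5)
Your proof is correct and matches the paper's intended argument exactly: the corollary follows by combining Proposition~\ref{nice} (giving $\depth_SI\leq d$, via the observation $\rho_d(I)\geq\mu(I)$) with Remark~\ref{h} (giving $\depth_SI\geq d$), which is precisely what the paper indicates in the closing sentence of Remark~\ref{h}. Your parenthetical remark that in fact $\rho_d(I)=\mu(I)$ here is also correct, though not needed.
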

\begin{Example} \label{1} {\em Let $I=(x_1x_2,x_2x_3)\subset S:=K[x_1,x_2,x_3].$ Then $d=2$ and $\mu(I)=2>(1/2){3\choose 2}$. It follows that $\depth_SI= 2$ by the above corollary.}
\end{Example}

\begin{Example}\label{2}{\em Let $I=(x_1x_2,x_1x_3,x_1x_4, x_2x_3,x_2x_5,x_3x_4,x_3x_5,x_4x_5)\subset \\
S:=K[x_1,\ldots,x_5]$. Then $d=2$ and $\mu(I)=8>(3/4){5\choose 2}$ and so $\depth_SI=2$.}
\end{Example}

Next lemma  presents a nice class of square free monomial ideals  $I$ with $\mu(I)={n\choose d+1}\leq ((n-d)/(n-d+1)){n\choose d}$ but  $\depth_SI= d$.
We suppose that $n\geq 3$.
Let  $w$ be the only square free monomial of degree $n$ of $S$, that is  $w=\Pi_{j=1}^n x_i$. Set $f_i=w/(x_ix_{i+1})$ for $1\leq i<n$, $f_n=w/(x_1x_n)$ and let $L_n:=(f_1,\ldots,f_{n-1})$, $I_n:=(L,f_n)$ be ideals of $S$ generated in degree $d=n-2$.  We will see that $\depth_SI_n=n-2$ even $\mu(I_n)=n={n\choose d+1}$.
\begin{Lemma}\label{n} Then $\depth_SL_n=n-1$ and $\depth_S I_n=n-2$.
\end{Lemma}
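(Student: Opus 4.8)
The plan is to handle the two statements together by reducing modulo $x_n$ and exploiting a clean coincidence: the colon ideals $(L_n:x_n)$ and $(I_n:x_n)$ are equal, so the whole difference between $L_n$ and $I_n$ is concentrated in the quotient by $x_n$ — and there it is exactly the difference between a hypersurface (Cohen--Macaulay) and a two--generated ideal (not Cohen--Macaulay). First I would record the two reductions. Among the generators $f_i=w/(x_ix_{i+1})$ only $f_1,\dots,f_{n-2}$ are divisible by $x_n$, and dividing them by $x_n$ yields precisely the generators $w'/(x_ix_{i+1})$ of $L_{n-1}$ in $R:=K[x_1,\dots,x_{n-1}]$ (with $w'=x_1\cdots x_{n-1}$); the leftover generators $f_{n-1}=x_1\cdots x_{n-2}$ and $f_n=x_2\cdots x_{n-1}$ are multiples of $L_{n-1}$-generators, whence
\[
(L_n:x_n)=(I_n:x_n)=L_{n-1}S .
\]
Reducing instead modulo $x_n$ kills $f_1,\dots,f_{n-2}$ and leaves
\[
S/(L_n,x_n)\cong R/(x_1\cdots x_{n-2}),\qquad S/(I_n,x_n)\cong R/(x_1\cdots x_{n-2},\,x_2\cdots x_{n-1}).
\]
These produce, for $M=L_n$ and for $M=I_n$, the short exact sequence
\[
0\to S/L_{n-1}S \xrightarrow{\ \cdot x_n\ } S/M \to S/(M,x_n)\to 0 ,
\]
and the inductive hypothesis to carry is $\depth_S S/L_{n-1}S=n-2$, which holds because $S/L_{n-1}S=(R/L_{n-1})[x_n]$ and $\depth_R R/L_{n-1}=(n-1)-2$.

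For $L_n$ the right-hand term $R/(x_1\cdots x_{n-2})$ is a hypersurface, hence Cohen--Macaulay of dimension $n-2$. Both outer modules in the sequence then have depth $n-2$, so the depth lemma gives $\depth_S S/L_n\ge n-2$; since $\dim S/L_n=n-2$ (all minimal primes $(x_j,x_k)$, with $\{j,k\}$ a non-edge of the path, have height $2$) we get $\depth_S S/L_n=n-2$, i.e. $\depth_S L_n=n-1$. The base case $n=3$, where $L_3=(x_1,x_3)$, is immediate.

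For $I_n$ the right-hand term $C=R/(x_1\cdots x_{n-2},x_2\cdots x_{n-1})$ is generated by two monomials, neither dividing the other, so its Taylor complex is a minimal free resolution of length $2$ and $\depth_S C=(n-1)-2=n-3$, while $\dim C=n-2$; thus $C$ is not Cohen--Macaulay. The depth lemma gives $\depth_S S/I_n\ge n-3$ (this also follows at once from Remark~\ref{h}, since $d=n-2$). The genuinely delicate point is the reverse inequality $\depth_S S/I_n\le n-3$, equivalently $\Coh{n-3}{S/I_n}\ne 0$. Here the depths of the two outer terms differ by exactly one, so the depth lemma is inconclusive, and one must show that the connecting map $\Coh{n-3}{C}\to\Coh{n-2}{S/L_{n-1}S}$ in the local cohomology sequence fails to be injective.

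To settle this I would pass to the Stanley--Reisner picture: $S/I_n=K[\Delta]$, where the minimal non-faces of $\Delta$ are the complements of the cyclically adjacent pairs, so $\dim\Delta=n-3$ and the Alexander dual $\Delta^\vee$ is the $n$-cycle $C_n$, a one-dimensional complex homotopy equivalent to $S^1$. Combinatorial Alexander duality then yields $\tilde H_{n-4}(\Delta;K)\cong\tilde H^{1}(\Delta^\vee;K)=\tilde H^1(C_n;K)\ne 0$, so $\Delta$ has nonvanishing reduced homology in degree $n-4<\dim\Delta$; by Reisner's criterion $K[\Delta]$ is not Cohen--Macaulay, forcing $\depth_S S/I_n\le n-3$. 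Combined with the lower bound this gives $\depth_S S/I_n=n-3$, hence $\depth_S I_n=n-2$ (small cases such as $n=3$, where $I_3=\mathfrak{m}$, are checked directly). I expect this last homological computation to be the main obstacle: it is exactly the nonzero class in $\Coh{n-3}{S/I_n}$ coming from the cycle $C_n\simeq S^1$, a class that has no counterpart for $L_n$, whose dual complex is the contractible path; everything else is the routine colon and depth-lemma bookkeeping above.
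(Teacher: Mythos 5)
Your proof is correct, but for the crucial half of the statement it takes a genuinely different route from the paper's. The paper also begins with $(L_n:x_n)=(I_n:x_n)=L_{n-1}S$, but instead of the multiplication-by-$x_n$ sequence it uses the intersection decompositions $L_n=(L_{n-1}S)\cap(x_n,f_{n-1})$ and $I_n=(L_{n-1}S)\cap(x_n,u)$, $u=w/(x_1x_{n-1}x_n)$ (after discarding the redundant component $(x_1,x_{n-1},x_n)$), and applies the Depth Lemma to the Mayer--Vietoris type sequences
$$0\to S/L_n\to S/L_{n-1}S\oplus S/(x_n,f_{n-1})\to S/(x_n,L_{n-1})S\to 0,$$
$$0\to S/I_n\to S/L_{n-1}S\oplus S/(x_n,u)\to S/(x_n,I_{n-1}S)\to 0.$$
There $S/I_n$ is the \emph{submodule}, and since the cokernel $S/(x_n,I_{n-1}S)$ has depth exactly $n-4$ --- which is why the paper's induction must carry the $I$-statement as well as the $L$-statement --- the Depth Lemma in the case where the middle term has strictly larger depth than the cokernel gives equality, $\depth_S S/I_n=(n-4)+1=n-3$, i.e.\ both bounds at once. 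You correctly observed that in your sequence, where $S/I_n$ is the \emph{middle} term and the outer depths are $n-2$ and $n-3$, the Depth Lemma cannot produce the upper bound, and you supplied it by a non-inductive topological argument: $\Delta^\vee=C_n$, Alexander duality giving $\tilde H_{n-4}(\Delta;K)\cong\tilde H^1(C_n;K)\neq 0$, and Reisner's criterion forcing $\depth_S S/I_n<\dim S/I_n=n-2$. I checked your colon and quotient identifications, the dimension counts for $L_n$ and for $C$, and the duality indexing; they are all right, and the $n=3$ base cases are handled. What each approach buys: the paper's stays entirely within elementary depth-lemma bookkeeping but needs the heavier induction (both statements) and the small trick of removing the embedded component; yours carries only the $L$-statement through the induction (indeed, with Remark \ref{h} supplying the lower bound, your treatment of $I_n$ needs no induction at all) and isolates the conceptual reason for the depth drop --- the nonvanishing $H^1$ of the dual cycle, which has no counterpart for the contractible dual path --- at the price of invoking Alexander duality and Reisner's criterion.
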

\begin{proof} Apply induction on $n\geq 3$. If $n=3$ then $L_3=(x_3,x_1)$, $I_3=(x_1,x_2,x_3)$ and the result is trivial. Assume that $n>3$. Note that $(L_n:x_n)=L_{n-1}S=(I_n:x_n)$ because $f_n,f_{n-1}\in (L_n:x_n)$. We have $$L_n=(L_n:x_n)\cap (x_n,L_n)=(L_{n-1}S)\cap (x_n,f_{n-1}),$$
$$I_n=(I_n:x_n)\cap (x_n,I_n)=(L_{n-1}S)\cap (x_n,f_{n-1},f_n)=(L_{n-1}S)\cap (x_n,u)\cap (x_1,x_{n-1},x_n),$$
where $u=w/(x_1x_{n-1}x_n )$. But $(x_1,x_{n-1})$ is a minimal prime ideal of  $L_{n-1}S$ and so we may remove $ (x_1,x_{n-1},x_n)$ above, that is
$I_n=(L_{n-1}S)\cap (x_n,u)$. On the other hand, $(L_{n-1}S)+ (x_n,u)=(x_n,I_{n-1})$ and $(L_{n-1}S)+ (x_n,f_{n-1})=(x_n,L_{n-1})S$ because $f_{n-1}\in L_{n-1}S$.
We have the following exact sequences
$$0\to S/L_{n}\to S/L_{n-1}S\oplus S/(x_n,f_{n-1})\to S/(x_n,L_{n-1}S)\to 0,$$
$$0\to S/I_{n}\to S/L_{n-1}S\oplus S/(x_n,u)\to S/(x_n,I_{n-1}S)\to 0.$$
By induction hypothesis $\depth L_{n-1}=n-2$ and  $\depth I_{n-1}=n-3$
and so $\depth_SS/(x_n,L_{n-1}S)=n-3$, $\depth_SS/(x_n,I_{n-1}S)=n-4$. As $\depth_S S/(x_n,f_{n-1})=\depth_S S/(x_n,u)=n-2$, it follows
$\depth_SS/L_n=n-2$, $\depth_S S/I_n=n-3$ by the Depth Lemma applied to the above exact sequences.
\end{proof}

Now, let $I$ be an arbitrary square free monomial ideal  and $P_I$ the poset given by all square free monomials of $I$ (a finite set) with the order given by the divisibility. Let ${\mathcal P}$ be a partition of $P_I$ in intervals $[u,v]=\{w\in P_I: u|w, w|v\}$, let us say  $P_I=\cup_i [u_i,v_i]$, the union being disjoint.
Define $\sdepth {\mathcal P}=\min_i\deg v_i$ and $\sdepth_SI=\max_{\mathcal P} \sdepth {\mathcal P}$, where ${\mathcal P}$ runs in the set of all partitions of $P_I$.
This is the so called the Stanley depth of $I$, in fact this is an equivalent definition given in a general form by \cite{HVZ}.

For instance, in Example \ref{1}, we have
$P_I=\{x_1x_2,x_2x_3,x_1x_2x_3\}$ and we may take ${\mathcal P}:\ \
P_I=[x_1x_2,x_1x_2x_3]\cup [x_2x_3,x_2x_3]$ with
$\sdepth_S{\mathcal P}=2$. Moreover, it is clear that   $\sdepth_SI=2$.
\begin{Remark} \label{st}{\em
  If $I$ is generated by $\mu(I)>{n\choose d+1}$ square free monomials of degree $d$ then $\sdepth_SI=d$.  Since $((n-d)/(n-d+1)){n\choose d}\geq {n\choose d+1}$, the Proposition \ref{nice} says that in a weaker case case $\depth_SI\leq \sdepth_SI$, which was in general  conjectured by Stanley \cite{S}. ´Stanley's Conjecture holds for intersections of four monomial prime ideals  of $S$ by \cite{AP} and \cite{P1} and for square free monomial ideals of $K[x_1,\ldots,x_5]$ by \cite{P} (a short exposition on this subject  is given in \cite{P2}). It is worth to mention that Proposition \ref{nice} holds in the stronger case when  $\mu(I)>{n\choose d+1}$ (see \cite{P3}), but the proof is much more complicated and the easy proof given in the present case has its importance.}
  \end{Remark}

In the Example \ref{2} we have
$P_I=[x_1x_2,x_1x_2x_4]\cup [x_1x_3,x_1x_3x_5]\cup [x_1x_4,x_1x_4x_5]\cup [x_2x_3,x_1x_2x_3]\cup [x_3x_4,x_1x_3x_4]\cup [x_3x_5,x_3x_4x_5]
\cup [x_4x_5,x_2x_4x_5]\cup [x_2x_3x_4,x_2x_3x_4]\cap [x_2x_3x_5,x_2x_3x_5]\cup (\cup_{\alpha} [\alpha,\alpha])$, where $\alpha$ runs in the set of square free monomials of $I$ of degree $4,5$.
It follows that $\sdepth_SI=3$. But as we know $\depth_SI=2$.

\vskip 0.5 cm

\end{document}